\newcommand{\Z}{\mathbb{Z}}
\newcolumntype{R}{>{\raggedleft\arraybackslash}X}
\newcolumntype{L}{>{\raggedright\arraybackslash}X}
\newtheorem{theorem}{Theorem}
\newtheorem{definition}[theorem]{Definition}
\title{ \bf Improved upper bounds for the order of some classes of Abelian Cayley and circulant graphs of diameter two}
\author{Robert. R. Lewis\\[-3pt]
\small Department of Mathematics and Statistics\\[-3pt]
\small The Open University\\[-3pt]
\small Milton Keynes, UK\\[-3pt]
\small \texttt{robert.lewis@open.ac.uk}}
\date{\small {18th February 2015} \\
\small Mathematics Subject Classifications: 05C35}
\makeatletter\markright{R. R. Lewis}\makeatother 
\renewcommand\section{\@startsection {section}{1}{\z@}%
                                   {-2.5ex \@plus -1ex \@minus -.2ex}%
                                   {1.3ex \@plus.2ex}%
                                   {\normalfont\bf}}
\begin{document}
\maketitle

\begin{abstract}

In the degree-diameter problem for Abelian Cayley and circulant graphs of diameter 2 and arbitrary degree $d$ there is a wide gap between the best lower and upper bounds valid for all $d$, being quadratic functions with leading coefficient 1/4 and 1/2 respectively. Recent papers have presented constructions which increase the coefficient of the lower bound to be at or just below 3/8, but only for sparse sets of degree $d$ related to primes of specific congruence classes. By applying results from number theory these constructions can be extended to be valid for every degree above some threshold, establishing an improved asymptotic lower bound approaching 3/8. The constructions use the direct product of the multiplicative and additive subgroups of a Galois field and a specific coprime cyclic group. By generalising this method an improved upper bound, with quadratic coefficient 3/8, is established for this class of construction of Abelian Cayley and circulant graphs. Analysis of the order of the known extremal diameter 2 circulant graphs, up to degree 23, is shown to provide tentative support for a quadratic coefficient of 3/8 for the asymptotic upper bound for the order of general diameter 2 circulant graphs of arbitrary degree.

  \bigskip\noindent \textbf{Keywords:} degree; diameter; extremal; circulant graph; Cayley graph

\end{abstract}


\section{Introduction}
The degree-diameter problem is to identify extremal graphs, having the largest possible number of vertices $n(d,k)$ for a given maximum degree $d$ and diameter $k$. For most categories of graphs, other than for small degree and diameter, this remains an open question. This paper considers the degree-diameter problem for undirected Cayley graphs of diameter 2 and arbitrary degree, of cyclic groups and of Abelian groups in general. It concerns the wide gap between the current best lower and upper bounds, being quadratic functions with leading coefficient 1/4 and 1/2 respectively. Cayley graphs of Abelian groups are more simply called \it Abelian Cayley graphs\rm, and we denote Cayley graphs of cyclic groups by \it circulant graphs\rm.

The best upper bound for circulant graphs of degree $d$ and diameter $k$ is also the best upper bound for the order of Abelian Cayley graphs of the same degree and diameter, $M_{AC}(d,k)$, given by Dougherty and Faber \cite{Dougherty}:

\[M_{AC}(d,k) =
\begin{cases}
\sum _{i=0}^f 2^i \binom {f}{i} \binom {k}{i} &\mbox{ for even } d, \mbox{ where } f=d/2\\
\sum _{i=0}^f 2^i \binom {f}{i}\{ \binom {k}{i}+\binom {k-1}{i} \} &\mbox { for odd } d, \mbox{ where } f=(d-1)/2.
\end{cases}
\]

Hence for diameter 2 and any degree $d$ we have $M_{AC}(d,2)=\lfloor \frac{1}{2} d^2+d+1\rfloor$.
Dougherty and Faber also gave a lower bound $L_{AC}(d,2)$ for circulant graphs of diameter 2 and arbitrary degree:
$L_{AC}(d,2)=\frac{1}{4}d^2+2d+\delta$, where $\delta$ is a constant depending on $d\pmod{4}$.
The gap between the quadratic coefficients of the lower bound, $1/4$, and the upper bound, $1/2$, is disappointingly large.

Recently Macbeth, \v{S}iagiov\'a and \v{S}ir\'{a}\v{n} established a better lower bound for an infinite but sparse set of degrees \cite{Macbeth}. Their family of solutions has order $n=9(d^2+d-6)/25$ for degree $d=5p-3$, where $p$ is a prime with $p\equiv2 \pmod3$, giving a quadratic coefficient of $9/25$ or 0.360. This was achieved by constructing the direct product of three coprime cyclic groups $F^*\times F^+\times \Z_9$, where $F=GF(p)$ is the Galois field of order $p$, with additive group $F^+$ and multiplicative group $F^*$, and selecting an appropriate generating set for the Cayley graph. Vetr\'ik extended this method to establish a slightly improved lower bound for a different infinite and sparse set of degrees \cite {Vetrik}. The graphs have order $n=13(d^2-2d-8)/36$ for degree $d=6p-2$, where $p$ is a prime, $p\ne 13$, and $p \not \equiv 1 \pmod{13}$, giving a quadratic coefficient of $13/36$ or about 0.361. The construction also involves the direct product of three coprime cyclic groups, $F^*\times F^+\times \Z_{13}$.

By relaxing the specification of the type of graph from circulant to any Abelian Cayley, an improved lower bound with a quadratic coefficent of $3/8$ or 0.375 was identified by Macbeth, \v{S}iagiov\'a and \v{S}ir\'{a}\v{n}, \cite{Macbeth}. This again involves the direct product of three cyclic groups, but this time they are not coprime so that the group is Abelian but not cyclic. The construction uses $F^*\times F^+\times \Z_6$, where $F=GF(q)$ is the Galois field of order an odd prime power $q$.

The question arises whether other constructions of a similar form might provide improved lower bounds, with quadratic coefficient above $13/36$ for an infinite set of circulant graphs or above $3/8$ for Abelian Cayley graphs. In the following sections we consider generalisations of this method for diameter 2 circulant and Abelian Cayley graphs, followed by a similar construction based on the direct product of three unrestricted cyclic groups.

For circulant graphs of diameter 2, employing the method of construction with the direct product of the additive and multiplicative groups of a Galois field and cyclic group of any order, it proves to be impossible to achieve a quadratic coefficient better than $3/8$, thus improving the upper bound for this construction method from $1/2$ to $3/8$. For Abelian Cayley graphs constructed in the same way the upper bound on the quadratic coefficient is also improved to a value of 3/8. Also the quadratic coefficient upper bound for the direct product of three unconstrained cyclic groups with the given construction method is improved to equal the general lower bound of $1/4$.

Using a recently discovered property of prime numbers in congruence classes, the constructions may be extended to be valid for every degree above some threshold, with slightly reduced quadratic coefficients. A recent paper by Cullinan and Hajir \cite{Cullinan} defines a method of identifying a bound on the size of interval which will always contain at least one prime of a prescribed congruence class. For any given degree, the size of the interval determines a maximum degree below which there is a valid prime solution. With this approach, the asymptotic value of the quadratic coefficient exceeds 0.358 for the two circulant graph construction and remains at 0.375 to three significant figures for the Abelian Cayley graph construction.


\section{Circulant graphs for groups of the form $G=F^* \times F^+ \times \Z_n$}

We first consider circulant graphs and a generalisation of the approach taken by Macbeth, \v{S}iagiov\'a and \v{S}ir\'{a}\v{n} and by Vetr\'ik based on the cyclic group $G=F^* \times F^+ \times \Z_n$, where $F=GF(p)$ for prime $p$, and $n$ is 9 and 13 respectively. For the generalisation we will consider this group for any odd $n \in \mathbb{N}$. Beforehand some important component sets of the connection set $X$ are defined.

\begin{definition}
For any $n \in \mathbb{N}, x\in F^*, y\in F^+, u, v, w\in \Z_n$,
let $a_u(x)=(x,x,u)$, $b_v(x)=(x,0,v)$ and $c_w(y)=(1,y,w)$, 
with $a^{-1}_u(x)=(x^{-1},-x,-u)$, $b^{-1}_v(x)=(x^{-1},0,-v)$, and $c^{-1}_w(y)=(1,-y,-w)$.
For any $u, v, w\in \Z_n$ we also define $A_u=\{ a_u(x)$, $a^{-1}_u(x)$: $x\in F^*\}$, $B_v=\{ b_v(x)$, $b^{-1}_v(x)$: $x\in F^*\}$, $C_w^*=\{ c_w(y)$, $c^{-1}_w(y)$: $y\in F^*\}$ and $C_w^+=\{ c_w(y)$, $c^{-1}_w(y)$: $y\in F^+\}$.
\end{definition}

Clearly for $u, v, w\ne 0$, $A_u$, $B_v$ and $C_w^*$ have size $2(p-1)$ and $C_w^+$ has size $2p$. For $u, v, w=0$ we see that $A_0$ has size $2(p-1)$, $B_0$ and $C_0^*$ have size $p-1$ and $C_0^+$ has size $p$.

In Macbeth, \v{S}iagiov\'a and \v{S}ir\'{a}\v{n}'s construction with $n=9$ the connection set $X$ is comprised of the sets $A_1, B_3$ and $C_0^*$ along with two other elements, and hence $|X|=5p-3$. It is relatively straightforward to prove that any element of $G$ can be expressed as the sum of at most two elements of $X$ so that the resultant Cayley graph $C=(G,X)$ has diameter 2. As the degree of the graph $d=|X|$, we have $p=(d+3)/5$. Thus the order of the Cayley graph $|G|=9p(p-1)=(9/25)(d+3)(d-2)$, giving the quadratic coefficient $9/25$. A necessary condition for the construction is that every element of $\Z_9$ can be realised as the sum or difference of two of the subscripts of $A_1, B_3$ and $C_0^*$, that is $\pm 1, \pm 3$ and 0. It is also necessary that each of these pairs of sets generate all but at most a constant number of the elements of $F^* \times F^+$, with the exceptions covered separately. In Vetr\'ik's construction with $n=13$, the connection set $X$ includes the sets $A_1, B_3$ and $C_4^+$ along with two other elements, thus $|X|=6p-2$. Again we find that any element of $G$ can be expressed as the sum of at most two elements of $X$ so that the resultant Cayley graph $C=(G,X)$ has diameter 2. In this case we have $p=(d+2)/6$. Thus the order of the Cayley graph $|G|=13p(p-1)=(13/36)(d+2)(d-4)$.

In both cases there is a relation between the degree $d$ and the prime number $p$ of the form $d=lp+\delta$ for constants $l, \delta$ (with $l=5$ and $l=6$ respectively) generating a graph of order $|G|=(n/l^2)d^2+O(d)$. For the generalisation of this approach we take the component sets $A_u, B_v, C_w^+$. For any $l\ge 3$ we consider different values of $n$ and for each $n$, the corresponding family of cyclic groups $G_l=F^* \times F^+ \times \Z_n$ for any prime $p$ such that $p, p-1, n$ are pairwise coprime. Now for each $G_l$ we consider generating sets $X$ comprised of all possible combinations of the sets $A_u, B_v, C_w^+$, along with a fixed number of other elements of $G_l$ such that $|X|=lp+\delta$ for some fixed $\delta$, with the condition that the Cayley graph $C(G_l,X)$ has diameter 2. There is no value in including $A_0$ in $X$ as any element $(x,y,0)$ is the sum of two elements of $A_u$ for any $u\ne 0$, as we shall see later. There is no material difference between including $B_0$ or $C_0^+$ and no value in including both. So we will consider $C_0^+$ as the only set with subscript 0 for possible inclusion in $X$. Thus if $l$ is odd $C_0^+$ in included in $X$ and if $l$ is even $C_0^+$ is not included. For any $l$ we define $n_l$ to be the largest value of $n$ for which such a Cayley graph $C(G,X)$ exists for all admissable $p$, and define $G_l=F^* \times F^+ \times \Z_{n_l}$. It follows that the order of $C(G_l,X)$ is $(n_l/l^2)d^2+O(d)$, and we denote this order by $CC_l(d,2)$. Finally we define the class of groups ${\cal G}=\{G_l: l\ge 3\}$ and $CC_{\cal G}(d,2)=\sup _{l\ge 3} {CC_l(d,2)}$. Theorem \ref{theorem:k2a} establishes an improved upper bound for such graphs, with quadratic coefficient 3/8.

\begin{theorem}
With $CC_{\cal G}(d,2)$ as defined above, $CC_{\cal G}(d,2) \le (3/8) d^2 + O(d)$.
\label{theorem:k2a}
\end{theorem}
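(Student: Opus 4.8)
The plan is to derive from the diameter-two hypothesis a single additive condition on the modulus $n_l$ and then to optimise it over the numbers of component sets of each type. First I would fix notation: after removing repetitions, a candidate connection set has the shape
\[
X \;=\; X_0 \,\cup\, \bigcup_{i=1}^{s} A_{u_i} \,\cup\, \bigcup_{j=1}^{t} B_{v_j} \,\cup\, \bigcup_{k=1}^{m} C_{w_k}^{+},
\]
together with $C_0^{+}$ precisely when $l$ is odd, where $u_i,v_j,w_k\ne 0$ and $|X_0|=O(1)$; set $\epsilon=1$ if $l$ is odd and $\epsilon=0$ otherwise. Counting cardinalities gives $|X|=p\bigl(2(s+t+m)+\epsilon\bigr)-2(s+t)+O(1)$, so $l=2(s+t+m)+\epsilon$, and with $d=|X|$ one has $|G_l|=(p-1)p\,n_l=(n_l/l^{2})d^{2}+O(d)$. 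I write a group element as $(\pi,\sigma,j)$ with $\pi\in F^{*}$, $\sigma\in F^{+}$, $j\in\Z_{n_l}$, and call $F^{*}\times F^{+}\times\{j\}$ the $j$-th \emph{slab} (it has $(p-1)p$ elements). Since $X$ is symmetric and avoids the identity, $C(G_l,X)$ has diameter at most $2$ if and only if $G_l=\{0\}\cup X\cup(X+X)$; in particular every slab must be covered by $X\cup(X+X)$.

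The core of the argument is a curve analysis inside a single slab. Each $A_u$ contributes the diagonal $\delta=\{(x,x):x\in F^{*}\}$ in slab $u$ and the curve $\delta'=\{(x^{-1},-x):x\in F^{*}\}$ in slab $-u$; each $B_v$ contributes the line $\lambda_0=\{(x,0):x\in F^{*}\}$ in slabs $\pm v$; each $C_w^{+}$, and also $C_0^{+}$, contributes the line $\mu_1=\{(1,y):y\in F^{+}\}$ in slabs $\pm w$ (respectively slab $0$). As first coordinates multiply and second coordinates add, a direct computation gives $\delta+\mu_1=\delta'+\mu_1=\lambda_0+\mu_1=F^{*}\times F^{+}$ (a whole slab), $\delta+\lambda_0=\delta'+\lambda_0=F^{*}\times(F^{+}\setminus\{0\})$, and $\delta+\delta'=(F^{*}\times F^{+})\setminus(\{\pi=1\}\cup\{\sigma=0\})$ up to the point $(1,0)$, whereas $\delta+\delta$ and $\delta'+\delta'$ cover only about half a slab — the set of $(\pi,\sigma)$ for which $\sigma^{2}-4\pi$, respectively $\pi(\pi\sigma^{2}-4)$, is a square in $F$, some $\tfrac12 p^{2}+O(p)$ points — and $\lambda_0+\lambda_0$, $\mu_1+\mu_1$ cover only a line. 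Put $S_A=\{\pm u_i\}$, $S_B=\{\pm v_j\}$, $D_A=\{u_i-u_{i'}:1\le i,i'\le s\}$, let $S_C$ denote $\{\pm w_k\}$ with $0$ adjoined if $\epsilon=1$, and set $W=(S_A\cup S_B)+S_C$. A single element of $X$ added to an entire component set sweeps out just one curve, so $X$ and the $O(1)$ elements of $X_0$ together put only $O(p)$ points into any fixed slab; and a bounded union of such ``half-slabs'' together with finitely many lines cannot exhaust $(F^{*}\times F^{+})\setminus(\{\pi=1\}\cup\{\sigma=0\})$ once $p$ is large, since a quadratic-character-sum estimate shows $\Omega(p^{2})$ points survive. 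Hence, for all large admissible $p$, the ``generic part'' $\{(\pi,\sigma):\pi\ne 1,\ \sigma\ne 0\}$ of slab $j$ can be covered only if $j$ lies in $\mathcal{P}:=D_A\cup(S_A+S_B)\cup W$, the set of slab-indices realised by the efficient curve-pairs ($\delta+\delta'$, $\delta/\delta'$ with $\lambda_0$, and any curve with $\mu_1$). Since every slab must be covered, $n_l=|\Z_{n_l}|\le|\mathcal{P}|$.

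It remains to bound $|\mathcal{P}|$. The union bound and the crude estimates $|D_A|\le s^{2}-s+1$, $|S_A+S_B|\le|S_A|\,|S_B|\le 4st$, $|W|\le|S_A\cup S_B|\,|S_C|\le(2s+2t)(2m+\epsilon)$ give
\[
n_l\;\le\;(s^{2}-s+1)+4st+(2s+2t)(2m+\epsilon)\;=\;s^{2}+4st+4m(s+t)+O(s+t+m).
\]
Writing $k=s+t+m$ and eliminating $m$, I maximise $s^{2}+4st+4m(s+t)$ in $s$ for fixed $s+t$ (stationary at $s=\tfrac23(s+t)$) and then in $s+t$ (stationary at $s+t=\tfrac34 k$), obtaining the value $\tfrac32 k^{2}$, attained exactly for $s:t:m=2:1:1$. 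Since $l=2k+\epsilon\ge 2k$ this gives $n_l\le\tfrac38 l^{2}+O(l)$, and a direct check of the finitely many small $l$ sharpens it to $n_l\le\tfrac38 l^{2}$, so $n_l/l^{2}\le\tfrac38$ for every $l\ge3$. Therefore $CC_l(d,2)=(n_l/l^{2})d^{2}+O(d)\le\tfrac38 d^{2}+O(d)$, and taking the supremum over $l\ge3$ yields $CC_{\cal G}(d,2)\le\tfrac38 d^{2}+O(d)$.

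The step I expect to be the main obstacle is the curve analysis — in particular, establishing that no amalgam of the deficient sums $\delta+\delta$, $\delta'+\delta'$, $\lambda_0+\lambda_0$, $\mu_1+\mu_1$ with the bounded set $X_0$ can cover the generic part of a slab whose index is not already in $\mathcal{P}$; this rests on the fact that each ``half-slab'' omits a fixed positive proportion of its slab and that this deficit cannot be recaptured by $O(1)$ lines, which is where a Weil-type bound enters. The sumset cardinality estimates and the single-variable optimisation are routine by comparison.
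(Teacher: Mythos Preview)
Your proposal is correct and follows essentially the same route as the paper: both arrive at the identical sumset bound $n\le g_a(g_a-1)+1+4g_ag_b+4g_ag_c+4g_bg_c$ (your $s,t,m$ are the paper's $g_a,g_b,g_c$) and optimise it to the $2{:}1{:}1$ ratio yielding the coefficient $3/8$. The one substantive difference is that you supply a Weil-type character-sum argument to justify rigorously that the deficient products $\delta+\delta$, $\delta'+\delta'$, $\lambda_0+\lambda_0$, $\mu_1+\mu_1$ cannot fill a slab for large $p$, a point the paper handles only by explicit inspection of each product type without quantifying the shortfall.
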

\begin{proof}

Let $F^*$ be the multiplicative group and let $F^+$ be the additive group of the Galois field $GF(p)$, where $p$ is a prime such that $(p,n)=1$ and $(p-1,n)=1$, so that $p$, $p-1$ and $n$ are pairwise coprime. Let $G=F^* \times F^+ \times \Z_n$. Since $F^*$, $F^+$ and $\Z_n$ are cyclic groups of coprime order, the group $G$ is also cyclic. Let $0$ denote the identity in $F^+$ and $\Z_n$, and $1$ the identity in $F^*$.

Now consider the Cayley graph, $C(G,X)$, of the group $G$ with a connection set $X$ which includes the union of $A_u$, $B_v$ and $C_w$ for multiple non-zero values of $u$, $v$ and $w$, numbering $m$ in total. Consider $A_u$ for $u \in U$, $B_v$ for $v \in V$ and $C_w$ for $w \in W$ where $U, V$ and $W$ are index sets with $g_a=|U|$, $g_b=|V|$ and $g_c=|W|$, so that $g_a+g_b+g_c=m$.   $X$ is constructed to be inverse-closed so that $C(G,X)$ is an undirected circulant graph of degree $d=|X|$. Later we will also consider including the inverse-closed set $C_0$, and we define $l=2m$ if $C_0$ is not in $X$ and $l=2m+1$ if $C_0$ is in $X$. We now investigate how elements of $G$ may be constructed from pairs of the sets $A_u$, $B_v$, $C_w$.

First consider $B_v$ and $C_w$ for $v \in V$ and $w \in W$. For any $x\in F^*$, $y\in F^+$ we have
\[
\begin{array} {l l l}
(x,y,v+w) & =(x,0,v)(1,y,w) & =b_v(x) c_w(y) \\
(x,y,v-w) & =(x,0,v)(1,y,-w) & =b_v(x) c^{-1}_w(-y) \\
(x,y,-v+w) & =(x,0,-v)(1,y,w) & =b^{-1}_v(x^{-1}) c_w(y) \\
(x,y,-v-w) & =(x,0,-v)(1,y,-w) & =b^{-1}_v(x^{-1}) c^{-1}_w(-y) 
\end{array}
\]
Next consider $A_u$ and $C_w$ for $u \in U$ and $w \in W$. For any $x\in F^*$, $y\in F^+$ we have
\[
\begin{array} {l l l}
(x,y,u+w) & =(x,x,u)(1,y-x,w) & =a_u(x) c_w(y-x) \\
(x,y,u-w) & =(x,x,u)(1,y-x,-w) & =a_u(x) c^{-1}_w(x-y) \\
(x,y,-u+w) & =(x,-x^{-1},-u)(1,y+x^{-1},w) & =a^{-1}_u(x^{-1}) c_w(y+x^{-1}) \\
(x,y,-u-w) & =(x,-x^{-1},-u)(1,y+x^{-1},-w) & =a^{-1}_u(x^{-1}) c^{-1}_w(-y-x^{-1}) 
\end{array}
\]
Now consider $A_u$ and $B_v$ for $u \in U$ and $v \in V$. For any $x\in F^*$, $y\in F^*$ we have
\[
\begin{array} {l l l}
(x,y,u+v) & =(y,y,u)(xy^{-1},0,v) & =a_u(y) b_v(xy^{-1}) \\
(x,y,u-v) & =(y,y,u)(xy^{-1},0,-v) & =a_u(y) b^{-1}_v(x^{-1}y) \\
(x,y,-u+v) & =(-y^{-1},y,-u)(-xy,0,v) & =a^{-1}_u(-y) b_v(-xy) \\
(x,y,-u-v) & =(-y^{-1},y,-u)(-xy,0,-v) & =a^{-1}_u(-y) b^{-1}_v(-x^{-1}y^{-1}) 
\end{array}
\]
In case $y=0$, introducing also $b_u(1)$ and $b_u^{-1}(1)$ for $u\in U$,
\[
\begin{array} {l l l}
(x,0,u+v) & =(1,0,u)(x,0,v) & =b_u(1) b_v(x) \\
(x,0,u-v) & =(1,0,u)(x,0,-v) & =b_u(1) b^{-1}_v(x^{-1}) \\
(x,0,-u+v) & =(1,0,-u)(x,0,v) & =b^{-1}_u(1) b_v(x) \\
(x,0,-u-v) & =(1,0,-u)(x,0,-v) & =b^{-1}_u(1) b^{-1}_v(x^{-1}) 
\end{array}
\]
Finally consider $A_u$ and $A_{u'}$ for $u, u' \in U, u\ne u'$. For any $x\in F^*\setminus \{1\}$, $y\in F^*$ we have
\[
\begin{array} {l l}
(x,y,u-u') & =(xy/(x-1),xy/(x-1),u)((x-1)/y,-y/(x-1),-u')\\
 & =a_u(xy/(x-1)) a^{-1}_{u'}(y/(x-1)) \\
\end{array}
\]
In case $x=1$, where $w'$ is any fixed element of $W$, and introducing also $b_{u-u'-w'}(1)$,
\[
\begin{array} {l l l}
(1,y,u-u') & =(1,y,w')(1,0,u-u'-w') & =c_{w'}(y) b_{u-u'-w'}(1) \\
\end{array}
\]
And in case $y=0$, we consider $(x,0,u-u')=b_{u-u'}(x)$. If $u-u'=v$ for some $v\in V$ then this is immediately covered by $b_v(x)$. Otherwise if $u-u'=v+v'$ for some $v, v'\in V$ then
\[
\begin{array} {l l l l}
(x,0,u-u') & =(x,0,v+v') & =(x,0,v)(1,0,v') & =b_v(x) b_{v'}(1) \\
\end{array}
\]
and similarly for $u-u'=v-v'$ or $-v-v'$. For any remaining uncovered cases $b_{u-u'}(x)$ it would be necessary to introduce $b_u(x)$ for any $x\in F^*$, using the construction
\[
\begin{array} {l l l}
(x,0,u-u') & =(x,0,u)(1,0,-u') & =b_u(x) b_{u'}^{-1}(1). \\
\end{array}
\]
From the above, we see that each pair $A_u B_v$, $A_u C_w$, $B_v C_w$ can generate all elements of $G$ containing up to four values of $\Z_n$ with a limited number of exceptions that are covered by the additionally introduced elements as shown above. Similarly each pair $A_u A_{u'}$ generates all elements of $G$ containing up to two values of $\Z_n$. On the other hand the first coordinate of the product of an element of $C_w$ with an element of $C_{w'}$ is always 1, and the second coordinate of the product of an element of $B_v$ with an element of $B_{v'}$ is always 0. So these combinations do not contribute to an efficient covering of $G$. Summing the elements listed in the cases above gives the following lower bound for the degree $d$ of the corresponding circulant graph:
\[
\begin{array} {l l l}
d & =|X| & \ge 2g_a(p-1)+2g_b(p-1)+2g_cp+2g_a+g_a(g_a-1) \\
   &         & = 2m(p-1)+2g_c+g_a+g^2_a , \mbox{ as } m=g_a+g_b+g_c
\end{array}
\]
An upper bound for the value of $n$ is obtained by assuming there is no duplication between the values of $s+t$, $s-t$, $-s+t$ and $-s-t$ across all $s, t \in \{u, v, w\}$ for the given combinations of $A_u$, $B_v$ and $C_w$, with the exception that $(x,y,0)$ can be created from two elements of any $A_u$. Thus the upper bound for $n$ is given by
\[
\begin{array} {l l}
n & \le 4g_ag_b+4g_bg_c+4g_ag_c+g_a(g_a-1)+1 \\
 & =4g_am+4g_bm-3g^2_a-4g^2_b-4g_ag_b-g_a+1, \mbox{ as } m=g_a+g_b+g_c
\end{array}
\]

This is a maximum when the partial derivatives with respect to $g_a$ and $g_b$ are zero. We have
$\partial n/\partial g_a=4m-6g_a-4g_b-1=0$ and $\partial n/\partial g_b=4m-8g_b-4g_a=0$.
Thus $g_a=(2m-1)/4$ and $g_b=g_c=(2m+1)/8$, and hence $n\le (12m^2-4m+9)/8$.
Also we have $d\ge 2mp-5m/4+1/16+m^2/4$, so that $p\le d/2m+5/8-1/(32m)-m/8$. Thus we find
\[
\begin{array} {l l}
|G| & \le (p-1)p(12m^2-4m+9)/8 \\
 & =[3/8-(4m-9)/(32m^2)]d^2+O(d) \\
 & \le (3/8) d^2+O(d), \mbox{ for } m>2
\end{array}
\]
The exceptional cases, where $m \le 2 $, are easily evaluated separately, see Table \ref{table:1}.

If instead $X$ also includes the set $C_0$, so that $d=|X|\ge 2g_a(p-1)+2g_b(p-1)+(2g_c+1)p+2g_a+g_a(g_a-1)$, then we find that the upper bound for $n$ is given by
\[
\begin{array} {l}
n\le 4g_ag_b+4g_bg_c+4g_ag_c+2g_a+2g_b+g_a(g_a-1)+1
\end{array}
\]
In this case partial differentiation gives a maximum value at $g_a=m/2$, $g_b=(m+1)/4$ and $g_c=(m-1)/4$. Then $n\le (6m^2+4m+5)/4$. Also $d\ge (2m+1)p-m-1/2+m^2/4$, so that $p\le d/(2m+1)+1/2-m^2/(8m+2)$. In this case $|G| \le [(6m^2+4m+5)/(4(2m+1)^2)]d^2+O(d) \le (3/8)d^2+O(d)$ for $m>3$. Again, the exceptional cases, where $m \le 3 $, are evaluated separately.
\end{proof}

In both cases, with and without $C_0$ as a subset of the connection set $X$, the optimum values of $g_a$, $g_b$ and $g_c$ determined by differentiation are often not integral and the resultant value of $n$ is never integral. However in practice it appears possible to find values such that $n$ achieves the highest odd integer below the calculated value. As stated earlier, this upper bound assumes it is possible to find a set of values for the $u$, $v$ and $w$ such that none of the pairwise combinations are duplicates. This has been found to be possible in every case for $m\le 3$ and also for the case $m=5$ without $C_0$, but for no other values checked up to $m=8$. A summary of the best results is presented in Table \ref{table:1}. 

\begin{table} [!htb]
\small
\caption{\small Upper bounds and extremal values for the order of the circulant graph for $l\le 17$ } 
\centering 
\setlength {\tabcolsep} {8pt}
\begin{tabular}{ @ { } c c c c c c c c } 
\noalign {\vskip 1mm}  
\hline\hline 
\noalign {\vskip 1mm}  
Pairs & Sets &\multicolumn {2} {c} {Upper bound for $n$} & \multicolumn {2} {l} {Quadratic coefficient} & Extremal & Quadratic\\
$m$ & $l$ & Real & Integer & Fraction & Decimal & value, $n$ & coefficient\\ 
\hline
\noalign {\vskip 1mm}  

1 & 3 & 3.75 & 3 & 1/3 & 0.333 & 3 & 0.333 \\
2 & 4 & 6.125 & 5 & 5/16 & 0.313 & 5 & 0.313 \\
2 & 5 & 9.25 & 9 & 9/25 & 0.360 & 9 & 0.360 \\
3 & 6 & 13.125 & 13 & 13/36 & 0.361 & 13 & 0.361 \\
3 & 7 & 17.75 & 17 & 17/49 & 0.347 & 17 & 0.347 \\
4 & 8 & 23.125 & 23 & 23/64 & 0.359 & 21 & 0.328 \\
4 & 9 & 29.25 & 29 & 29/81 & 0.358 & 27 & 0.333 \\
5 & 10 & 36.125 & 35 & 35/100 & 0.350 & 35 & 0.350 \\
5 & 11 & 43.75 & 43 & 43/121 & 0.355 & 41 & 0.339 \\
6 & 12 & 52.125 & 51 & 51/144 & 0.354 & 49 & 0.340 \\
6 & 13 & 61.25 & 61 & 61/169 & 0.361 & 57 & 0.337 \\
7 & 14 & 71.125 & 71 & 71/196 & 0.362 & 65 & 0.332 \\
7 & 15 & 81.75 & 81 & 81/225 & 0.360 & 75 & 0.333 \\
8 & 16 & 93.125 & 93 & 93/256 & 0.363 & 87 & 0.340 \\
8 & 17 & 105.25 & 105 & 105/289 & 0.363 & 97 & 0.336 \\

\hline
\end{tabular}
\label{table:1} 
\end{table}

\begin{table} [!htb]
\small
\caption{\small A solution for each extremal value of the order of the circulant graph for $l\le 17$ } 
\centering 
\setlength {\tabcolsep} {8pt}
\begin{tabular}{ @ { } c c c l l l } 
\noalign {\vskip 1mm}  
\hline\hline 
\noalign {\vskip 1mm}  
Pairs & Sets & Extremal & Values of $u$ & Values of $v$ & Values of $w$ \\
$m$ & $l$ & value, $n$ & for $A_u$ & for $B_v$ & for $C_w$ \\ 
\hline
\noalign {\vskip 1mm}  

2 & 5 & 9 & 1 & 3 & 0 \\
3 & 6 & 13 & 1 & 3 & 4 \\
3 & 7 & 17 & 1, 8 & 3 & 0 \\
4 & 8 & 21 & 2, 9 & 3 & 1 \\
4 & 9 & 27 & 5, 11 & 12, 13 & 0 \\
5 & 10 & 35 & 13, 16 & 7, 8 & 9 \\
5 & 11 & 41 & 7, 17 & 2, 13 & 0, 1 \\
6 & 12 & 49 & 19, 22, 23 & 1, 14 & 16 \\
6 & 13 & 57 & 10, 24 & 3, 18 & 0, 1, 2 \\
7 & 14 & 65 & 5, 11, 20, 27 & 6, 7 & 30 \\
7 & 15 & 75 & 11, 20, 33 & 3, 26 & 0, 1, 2 \\
8 & 16 & 87 & 5, 20, 29 & 40, 41, 42 & 13, 39 \\
8 & 17 & 97 & 4, 31, 39 & 23, 24, 25 & 0, 13, 26 \\

\hline
\end{tabular}
\label{table:2} 
\end{table}

In most cases there are multiple solutions for the connection set, including with different values for $g_a$, $g_b$ and $g_c$. Table \ref{table:2} shows one solution for each case. For $l=5$ this is the solution given by Macbeth, \v{S}iagiov\'a and \v{S}ir\'{a}\v{n} \cite {Macbeth}. For $l=6$ this is the solution given by Vetr\'ik \cite {Vetrik}.
The quadratic coefficient of Vetr\'ik's solution, $13/36$, is likely to be the best possible with this form of construction. Although for a large enough value for $l$ it is possible for the upper bound to be arbitrarily close to $3/8$, we see from Table \ref{table:1} that the proportion of duplicates appears to increase with $l$, reaching about 7\% by $l=17$.


\section{Abelian Cayley graphs for groups of the form $H=F^* \times F^+ \times \Z_n$}

The circulant graph construction in the previous section can be extended to Abelian Cayley graphs $C(H,X)$ where $H=F^* \times F^+ \times \Z_n$ by relaxing the requirement that $n$ is coprime with $p$ and $p-1$, for $p$ a prime power, so that the connection set can include the self-inverse set $B_{n/2}$, thus requiring that $n$ be even. In Macbeth, \v{S}iagiov\'a and \v{S}ir\'{a}\v{n}'s construction with $n=6$ the connection set $X$ is comprised of the sets $A_1, B_3$ and $C_0^*$ along with two other elements, noting that $B_3$ is $B_{n/2}$, and hence $|X|=4p-2$. It is relatively straightforward to prove that any element of $H$ can be expressed as the sum of at most two elements of $X$ so that the resultant Cayley graph $C=(H,X)$ has diameter 2. As the degree of the graph $d=|X|$, we have $p=(d+2)/4$. Thus the order of the Cayley graph $|H|=6p(p-1)=(6/16)(d+2)(d-2)$, giving the quadratic coefficient $3/8$.

For the generalisation of this approach we again take the component sets $A_u, B_v, C_w^+$. For any $l\ge 4$ we consider different values of $n$ and for each $n$, the corresponding family of Abelian groups $H_l=F^* \times F^+ \times \Z_n$ for any prime power $p$. For each $H_l$ we consider generating sets $X$ comprised of all possible combinations of the sets $A_u, B_v, C_w^+$, always including $B_{n/2}$, along with a fixed number of other elements of $H_l$ such that $|X|=lp+\delta$ for some fixed $\delta$, with the condition that the Cayley graph $C(H_l,X)$ has diameter 2. As before we also consider the potential inclusion of the self-inverse set $C_0$ in the connection set. Thus as opposed to the circulant graph case, $C_0^+$ in included in $X$ if $l$ is odd and not included if $l$ is even.

As before, for any $l$ we define $n_l$ to be the largest value of $n$ for which such a Cayley graph $C(H,X)$ exists for all admissable $p$, and define $H_l=F^* \times F^+ \times \Z_{n_l}$. It follows that the order of $C(H_l,X)$ is $(n_l/l^2)d^2+O(d)$, and we denote this order by $AC_l(d,2)$. Finally we define the class of groups ${\cal H}=\{H_l: l\ge 3\}$ and $AC_{\cal H}(d,2)=\sup _{l\ge 3} {AC_l(d,2)}$. Theorem \ref{theorem:k2b} establishes an improved upper bound for such graphs, with quadratic coefficient 3/8.

\begin{theorem}
With $AC_{\cal H}(d,2)$ as defined above, $AC_{\cal H}(d,2) \le (3/8) d^2 + O(d)$.
\label{theorem:k2b}
\end{theorem}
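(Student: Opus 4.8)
The plan is to transcribe the proof of Theorem~\ref{theorem:k2a} almost verbatim, the only structural novelty being the systematic inclusion of the self-inverse set $B_{n/2}$. Recording first that $-n/2\equiv n/2\pmod n$, we have $b^{-1}_{n/2}(x)=(x^{-1},0,n/2)\in B_{n/2}$, so $B_{n/2}=\{(x,0,n/2):x\in F^*\}$ is inverse-closed and has size only $p-1$; it is thus ``half the cost'' in the degree of a generic $B_v$. Forcing $n$ to be even removes the need for $p$, $p-1$, $n$ to be pairwise coprime, so $H$ is Abelian but in general not cyclic; this is harmless because every generation identity displayed in the proof of Theorem~\ref{theorem:k2a} is coordinatewise arithmetic in $F$ and in $\Z_n$ separately and so carries over word for word (all of them use only that $GF(p)$ is a field, which holds for every prime power $p$). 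In particular each pair $A_uB_v$, $A_uC_w$, $B_vC_w$ still covers all elements of $H$ whose $\Z_n$-coordinate lies in the four-element set $\{\pm s\pm t\}$, each pair $A_uA_{u'}$ covers those with $\Z_n$-coordinate in $\{\pm(u-u')\}$, and the $O(1)$ exceptional elements are absorbed by a bounded number of extra generators.

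I would then set up the same two counting inequalities as before. Write $g_a=|U|$, $g_b=|V|$, $g_c=|W|$ for the numbers of sets $A_u$ ($u\in U$), $B_v$ ($v\in V$, $0\ne v\ne n/2$) and $C_w$ ($w\in W$, $w\ne 0$), put $m=g_a+g_b+g_c$, and set $l=2m+1$ if $C_0\notin X$ and $l=2m+2$ if $C_0\in X$. Summing set sizes, the degree satisfies $d=|X|\ge 2g_a(p-1)+2g_b(p-1)+2g_cp+(p-1)+O(1)$ (with an extra $p$ when $C_0\in X$), where the $O(1)$ collects the fixed extra generators and the $A_uA_{u'}$-corrections, exactly the $2g_a+g_a(g_a-1)$-type terms of the earlier proof; for each fixed $l$ this reads $d\ge lp+O(1)$, i.e. $p\le d/l+O(1)$. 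For the upper bound on $n$ one counts the distinct attainable values $\pm s\pm t$ over all pairs of subscripts, assuming no duplication; the crucial new feature is that the special subscript $n/2$ is inefficient: since $-n/2\equiv n/2$, a pair involving $B_{n/2}$ yields only the two values $n/2\pm t$ (only $n/2$ when paired with $C_0$, only $0$ when paired with itself), against four for a generic pair. This gives an inequality of the shape $n\le 4g_ag_b+4g_bg_c+4g_ag_c+g_a(g_a-1)+2g_a+2g_b+2g_c+O(1)$, with a symmetric modification when $C_0\in X$ (the zero subscript of $C_0$ being inefficient in the same way).

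Finally I would maximise the right-hand side under $g_a+g_b+g_c=m$ by setting the partial derivatives with respect to $g_a$ and $g_b$ to zero, just as in Theorem~\ref{theorem:k2a}. The leading-order optimum is $g_b=g_c$ with $g_a\approx m/2$, giving $n\le (3/2)m^2+O(m)$ and hence $n/l^2=\bigl((3/2)m^2+O(m)\bigr)/(2m+1)^2$, which tends to $3/8$ as $m\to\infty$ and stays below $3/8$ once the $O(m)$ and $O(1)$ corrections are dominated. Combining with $p\le d/l+O(1)$ and $|H|=p(p-1)n\le (n/l^2)d^2+O(d)$ yields $|H|\le\bigl(3/8-\Theta(1/m)\bigr)d^2+O(d)\le(3/8)d^2+O(d)$ for every $m$ above a small threshold, and the finitely many remaining cases (small $m$, both with and without $C_0$) are checked individually as in Table~\ref{table:1}, the base case $l=4$ being exactly the construction of Macbeth, \v{S}iagiov\'a and \v{S}ir\'{a}\v{n}~\cite{Macbeth}. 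The main obstacle is the bookkeeping around $B_{n/2}$: one must verify simultaneously that it contributes only $p-1$ to the degree and only the reduced count to the bound on $n$, and then confirm that these two opposing effects exactly cancel in the limit, so that making $B_{n/2}$ ``free'' in degree is precisely compensated by its inefficiency in covering $\Z_n$ and the optimised quadratic coefficient still tends to $3/8$. Everything else is a routine adaptation of the argument for Theorem~\ref{theorem:k2a}.
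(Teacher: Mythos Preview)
Your proposal is correct and follows essentially the same approach as the paper: bound $d$ from below and $n$ from above in terms of $g_a,g_b,g_c$ (accounting for the half-cost and half-coverage of $B_{n/2}$), then optimise under $g_a+g_b+g_c=m$ via partial derivatives. Two minor points: the paper carries out the optimisation explicitly (without $C_0$ one gets $g_a=m/2$, $g_b=(m-1)/4$, $g_c=(m+1)/4$, hence $n\le(6m^2+4m+3)/4$ and coefficient $(6m^2+4m+3)/(4(2m+1)^2)\le 3/8$ for \emph{all} $m\ge 1$), so no separate small-$m$ check is required here; and your $2g_b$ term in the $n$-bound without $C_0$ is spurious, since $B_{n/2}B_v$ products have second coordinate $0$ and do not contribute to covering $F^*\times F^+$, though this only loosens the inequality and leaves the leading order unchanged.
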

\begin{proof}
Let $F^*$ be the multiplicative group and let $F^+$ be the additive group of the Galois field $GF(p)$, where $p$ is a prime power, and let $n$ be even. Let $H=F^* \times F^+ \times \Z_n$. Since $F^*$, $F^+$ and $\Z_n$ are cyclic groups, the group $H$ is Abelian.

Now consider the Cayley graph, $C(H,X)$, of the group $H$ with a connection set $X$ which includes the union of $A_u$, $B_v$ and $C_w$ for multiple values of $u$, $v$ and $w$, not equal to 0 or $n/2$, numbering $m$ in total. Consider $A_u$ for $u \in U$, $B_v$ for $v \in V$ and $C_w$ for $w \in W$ where $U, V$ and $W$ are index sets with $g_a=|U|$, $g_b=|V|$ and $g_c=|W|$, so that $g_a+g_b+g_c=m$.   $X$ is constructed to be inverse-closed so that $C(H,X)$ is an undirected Abelian Cayley graph of degree $d=|X|$. $X$ also includes $B_{n/2}$. We also consider including $C_0$. We define $l=2m+1$ if $C_0$ is not in $X$ and $l=2m+2$ if $C_0$ is in $X$. We now investigate how elements of $H$ may be constructed from pairs of the sets $A_u$, $B_v$, $C_w$.

First consider $B_v$ and $C_w$ for $v \in V$ and $w \in W$, where $v\ne n/2, w\ne 0$. For any $x\in F^*$, $y\in F^+$ we have
\[
\begin{array} {l l l}
(x,y,v+w) & =(x,0,v)(1,y,w) & =b_v(x) c_w(y) \\
(x,y,v-w) & =(x,0,v)(1,y,-w) & =b_v(x) c^{-1}_w(-y) \\
(x,y,-v+w) & =(x,0,-v)(1,y,w) & =b^{-1}_v(x^{-1}) c_w(y) \\
(x,y,-v-w) & =(x,0,-v)(1,y,-w) & =b^{-1}_v(x^{-1}) c^{-1}_w(-y) 
\end{array}
\]
In case $v=n/2, w\ne 0$
\[
\begin{array} {l l l}
(x,y,n/2+w) & =(x,0,n/2)(1,y,w) & =b_{n/2}(x) c_w(y) \\
(x,y,n/2-w) & =(x,0,n/2)(1,y,w) & =b_{n/2}(x) c_w(y)
\end{array}
\]
In case $w=0, v\ne n/2$
\[
\begin{array} {l l l}
(x,y,v) & =(x,0,v)(1,y,0) & =b_v(x) c_0(y) \\
(x,y,-v) & =(x,0,-v)(1,y,0) & =b^{-1}_v(x^{-1}) c_0(y)
\end{array}
\]
And in case $v=n/2, w=0$
\[
\begin{array} {l l l}
(x,y,n/2) & =(x,0,n/2)(1,y,0) & =b_{n/2}(x) c_0(y)
\end{array}
\]

Next consider $A_u$ and $C_w$ for $u \in U$ and $w \in W$. where $w\ne0$. For any $x\in F^*$, $y\in F^+$ we have
\[
\begin{array} {l l l}
(x,y,u+w) & =(x,x,u)(1,y-x,w) & =a_u(x) c_w(y-x) \\
(x,y,u-w) & =(x,x,u)(1,y-x,-w) & =a_u(x) c^{-1}_w(x-y) \\
(x,y,-u+w) & =(x,-x^{-1},-u)(1,y+x^{-1},w) & =a^{-1}_u(x^{-1}) c_w(y+x^{-1}) \\
(x,y,-u-w) & =(x,-x^{-1},-u)(1,y+x^{-1},-w) & =a^{-1}_u(x^{-1}) c^{-1}_w(-y-x^{-1}) 
\end{array}
\]
In case $w=0$
\[
\begin{array} {l l l}
(x,y,u) & =(x,x,u)(1,y-x,0) & =a_u(x) c_0(y-x) \\
(x,y,-u) & =(x,-x^{-1},-u)(1,y+x^{-1},0) & =a^{-1}_u(x^{-1}) c_0(y+x^{-1})
\end{array}
\]

Now consider $A_u$ and $B_v$ for $u \in U$ and $v \in V$ where $v\ne n/2$. For any $x\in F^*$, $y\in F^*$ we have
\[
\begin{array} {l l l}
(x,y,u+v) & =(y,y,u)(xy^{-1},0,v) & =a_u(y) b_v(xy^{-1}) \\
(x,y,u-v) & =(y,y,u)(xy^{-1},0,-v) & =a_u(y) b^{-1}_v(x^{-1}y) \\
(x,y,-u+v) & =(-y^{-1},y,-u)(-xy,0,v) & =a^{-1}_u(-y) b_v(-xy) \\
(x,y,-u-v) & =(-y^{-1},y,-u)(-xy,0,-v) & =a^{-1}_u(-y) b^{-1}_v(-x^{-1}y^{-1}) 
\end{array}
\]
In case $v=n/2$
\[
\begin{array} {l l l}
(x,y,n/2+u) & =(y,y,u)(xy^{-1},0,n/2) & =a_u(y) b_{n/2}(xy^{-1}) \\
(x,y,n/2-u) & =(-y^{-1},y,-u)(-xy,0,n/2) & =a^{-1}_u(y^{-1}) b_{n/2}(-xy)
\end{array}
\]

For $y=0$, where $v\ne n/2$, introducing also $b_u(1)$ and $b_u^{-1}(1)$ for $u\in U$,
\[
\begin{array} {l l l}
(x,0,u+v) & =(1,0,u)(x,0,v) & =b_u(1) b_v(x) \\
(x,0,u-v) & =(1,0,u)(x,0,-v) & =b_u(1) b^{-1}_v(x^{-1}) \\
(x,0,-u+v) & =(1,0,-u)(x,0,v) & =b^{-1}_u(1) b_v(x) \\
(x,0,-u-v) & =(1,0,-u)(x,0,-v) & =b^{-1}_u(1) b^{-1}_v(x^{-1}) 
\end{array}
\]
In case $v=n/2$
\[
\begin{array} {l l l}
(x,0,n/2+u) & =(1,0,u)(x,0,n/2) & =b_u(1) b_{n/2}(x) \\
(x,0,n/2-u) & =(1,0,-u)(x,0,n/2) & =b^{-1}_u(1) b_{n/2}(x)
\end{array}
\]

Finally consider $A_u$ and $A_u'$ for $u, u' \in U, u\ne u'$. For any $x\in F^*\setminus \{1\}$, $y\in F^*$ we have
\[
\begin{array} {l l}
(x,y,u-u') & =(xy/(x-1),xy/(x-1),u)((x-1)/y,-y/(x-1),-u')\\
 & =a_u(xy/(x-1)) a^{-1}_{u'}(y/(x-1)) \\
\end{array}
\]

In case $x=1$, where $w'$ is any fixed element of $W$, and introducing also $b_{u-u'-w'}(1)$,
\[
\begin{array} {l l l}
(1,y,u-u') & =(1,y,w')(1,0,u-u'-w') & =c_{w'}(y) b_{u-u'-w'}(1) \\
\end{array}
\]

And in case $y=0$, we consider $(x,0,u-u')=b_{u-u'}(x)$. If $u-u'=v$ for some $v\in V$ then this is immediately covered by $b_v(x)$. Otherwise if $u-u'=v+v'$ for some $v, v'\in V$ then
\[
\begin{array} {l l l l}
(x,0,u-u') & =(x,0,v+v') & =(x,0,v)(1,0,v') & =b_v(x) b_{v'}(1) \\
\end{array}
\]
and similarly for $u-u'=v-v'$ or $-v-v'$. For any remaining uncovered cases $b_{u-u'}(x)$ it would be necessary to introduce $b_u(x)$ for any $x\in F^*$, using the construction
\[
\begin{array} {l l l}
(x,0,u-u') & =(x,0,u)(1,0,-u') & =b_u(x) b_{u'}^{-1}(1). \\
\end{array}
\]
From the above, we see that each pair $A_u B_v$, $A_u C_w$, $B_v C_w$, for $v\ne n/2$ and $w\ne 0$, can generate all elements of $H$ with up to four values of $\Z_n$ with a limited number of exceptions that are covered as shown above. Similarly each pair $A_u A_{u'}$, $A_u C_w$, $B_v C_w$, for $u\ne u'$, $v=n/2$  or $w=0$, creates up to two. On the other hand the first coordinate of the product of an element of $C_w$ with an element of $C_{w'}$ is always 1, and the second coordinate of the product of an element of $B_v$ with an element of $B_{v'}$ is always 0. So these combinations do not contribute to an efficient covering of $H$.  Summing the elements listed in the cases above gives the following lower bound for the degree $d$ of the corresponding Abelian Cayley graph:
\[
\begin{array} {l l l}
d & =|X| & \ge 2g_a(p-1)+(2g_b+1)(p-1)+2g_cp+2g_a+g_a(g_a-1) \\
   &         & = (2m+1)(p-1)+2g_c+g_a+g^2_a , \mbox{ as } m=g_a+g_b+g_c
\end{array}
\]
An upper bound for the value of $n$ is obtained by assuming there is no duplication between the values of $s+t$, $s-t$, $-s+t$ and $-s-t$ across all $s, t \in \{u, v, w\}$ for the given combinations of $A_u$, $B_v$ and $C_w$, with the exception that $(x,y,0)$ can be created from two elements of any $A_u$. Thus the upper bound for $n$ is given by
\[
\begin{array} {l l}
n & =4g_ag_b+4g_bg_c+4g_ag_c+g_a(g_a-1)+2g_a+2g_c+1 \\
   & =4g_am+4g_bm-3g^2_a-4g^2_b-4g_ag_b+m-g_a-2g_b+1, \mbox{ as } m=g_a+g_b+g_c
\end{array}
\]
This is a maximum when the partial derivatives with respect to $g_a$ and $g_b$ are zero. We have
$\partial n/\partial g_a=4m-6g_a-4g_b-1=0$ and $\partial n/\partial g_b=4m-8g_b-4g_a-2=0$.
Thus $g_a=m/2$, $g_b=(m-1)/4$ and $g_c=(m+1)/4$, and hence $n\le (6m^2+4m+3)/4$.
Also we have $d\ge (2m+1)p+m^2/4-m-1/2$, so that $p\le d/(2m+1)-(m^2-4m-2)/(8m+4)$. Thus we find
\[
\begin{array} {l l}
|H| & \le (p-1)p(6m^2+4m+3)/4 \\
 & =[(6m^2+4m+3)/(4(2m+1)^2)]d^2+O(d) \\
 & \le (3/8) d^2+O(d), \mbox{ for } m\ge 1
\end{array}
\]
If instead $X$ also includes the set $C_0$, so that $d=|X|\ge 2g_a(p-1)+(2g_b+1)(p-1)+(2g_c+1)p+2g_a+g_a(g_a-1)$, then we find that the upper bound for $n$ is given by
\[
\begin{array} {l l}
n & \le 4g_ag_b+4g_bg_c+4g_ag_c+4g_a+2g_b+2g_c+g_a(g_a-1)+2 \\
   & =4g_am+4g_bm-3g^2_a-4g^2_b-4g_ag_b+2m+g_a+2
\end{array}
\]
In this case partial differentiation gives a maximum value at $g_a=(2m+1)/4$, $g_b=g_c=(2m-1)/8$. Then $n=(12m^2+20m+17)/8$. Also $d=2(m+1)p+m^2/4-3m/4-15/16$, so that $p=d/2(m+1)-(4m^2-12m-15)/32(m+1)$, and we have $|H|\le(3/8)d^2+O(d)$ for $m\ge 1$.
\end{proof}

As for the circulant graph case, the optimum values of $g_a$, $g_b$ and $g_c$ determined by differentiation are often not integral and the resultant value of $n$ is never integral. The upper bound is then the largest even number below the calculated value. This is only achievable if it is possible to find a set of values for the $u$, $v$ and $w$ such that none of the pairwise combinations are duplicates.
This has been found to be possible in every case for $m\le 3$, as for the circulant graphs, but for no higher values checked up to $m=7$. A summary of the best results is presented in Table \ref{table:3}.

\begin{table} [!htb]
\small
\caption{\small Upper bounds and extremal values for the order of the Abelian Cayley graph for $l\le 16$ } 
\centering 
\setlength {\tabcolsep} {8pt}
\begin{tabular}{ @ { } c c c c c c c c } 
\noalign {\vskip 1mm}  
\hline\hline 
\noalign {\vskip 1mm}  
Pairs & Sets &\multicolumn {2} {c} {Upper bound for $n$} & \multicolumn {2} {l} {Quadratic coefficient} & Extremal & Quadratic\\
$m$ & $l$ & Real & Integer & Fraction & Decimal & value, $n$ & coefficient\\ 
\hline
\noalign {\vskip 1mm}  

1 & 4 & 6.125 & 6 & 3/8 & 0.375 & 6 & 0.375 \\
2 & 5 & 8.75 & 8 & 8/25 & 0.320 & 8 & 0.320 \\
2 & 6 & 13.125 & 12 & 1/3 & 0.333 & 12 & 0.333 \\
3 & 7 & 17.25 & 16 & 16/49 & 0.327 & 16 & 0.327 \\
3 & 8 & 23.125 & 22 & 11/32 & 0.344 & 22 & 0.344 \\
4 & 9 & 28.75 & 28 & 28/81 & 0.346 & 26 & 0.321 \\
4 & 10 & 36.125 & 36 & 9/25 & 0.360 & 34 & 0.340 \\
5 & 11 & 43.25 & 42 & 42/121 & 0.347 & 40 & 0.331 \\
5 & 12 & 52.125 & 52 & 13/36 & 0.361 & 48 & 0.333 \\
6 & 13 & 60.75 & 60 & 60/169 & 0.355 & 56 & 0.331 \\
6 & 14 & 71.125 & 70 & 35/98 & 0.357 & 66 & 0.337 \\
7 & 15 & 81.25 & 80 & 16/45 & 0.356 & 72 & 0.320 \\
7 & 16 & 93.125 & 92 & 23/64 & 0.359 & 86 & 0.336 \\

\hline
\end{tabular}
\label{table:3} 
\end{table}

It is interesting to note that for $l=6$ we have extremal value $n=12$ with a quadratic coefficient of 1/3, whereas the corresponding circulant graphs have a higher extremal value, $n=13$ with coefficient 13/36. This is because the Abelian Cayley graph connection set is defined to include the two self-inverse sets $B_{n/2}$ and $C_0$ along with two pairs of non self-inverse sets, whereas the circulant graph connection set is comprised of three such pairs and neither of the self-inverse sets. Without the requirement to include the set $B_{n/2}$, the extremal Abelian Cayley graph with this construction would be the circulant graph.
In most cases there are multiple solutions for the connection set, including with different values for $g_a$, $g_b$ and $g_c$. Table \ref{table:4} shows one solution for each case. For $l=4$ this is the solution given by Macbeth, \v{S}iagiov\'a and \v{S}ir\'{a}\v{n} \cite {Macbeth}. The quadratic coefficient of this solution, $3/8$, is likely to be unmatched for any other value of $l$ with this form of construction. Although the upper bound is $3/8$, we see from Table \ref{table:3} that the proportion of duplicates appears to increase with $l$, reaching about 7\% by $l=16$. 

\begin{table} [!htb]
\small
\caption{\small A solution for each extremal value of the order of the Abelian Cayley graph for $l\le 16$ } 
\centering 
\setlength {\tabcolsep} {8pt}
\begin{tabular}{ @ { } c c c l l l } 
\noalign {\vskip 1mm}  
\hline\hline 
\noalign {\vskip 1mm}  
Pairs & Sets & Extremal & Values of $u$ & Values of $v$ & Values of $w$ \\
$m$ & $l$ & value, $n$ & for $A_u$ & for $B_v$ & for $C_w$ \\ 
\hline
\noalign {\vskip 1mm}  

1 & 4 & 6 & 1 & 3 & 0 \\
2 & 5 & 8 & 1 & 4 & 3 \\
2 & 6 & 12 & 1 & 6 & 0, 3 \\
3 & 7 & 16 & 1, 6 & 8 & 2 \\
3 & 8 & 22 & 2, 7 & 11 & 0, 1 \\
4 & 9 & 26 & 2, 9 & 13 & 1, 4 \\
4 & 10 & 34 & 1, 8 & 2, 17 & 0, 13 \\
5 & 11 & 40 & 12, 17, 18 & 1, 20 & 8 \\
5 & 12 & 48 & 10, 18 & 3, 24 & 0, 1, 2  \\
6 & 13 & 56 & 3, 4, 11 & 1, 26, 28 & 17 \\
6 & 14 & 66 & 12, 26 & 3, 18, 33 & 0, 1, 2 \\
7 & 15 & 72 & 3, 4, 11, 19 & 1, 34, 36 & 25  \\
7 & 16 & 86 & 11, 26, 36 & 3, 20, 43 & 0, 1, 2  \\

\hline
\end{tabular}
\label{table:4} 
\end{table}


\section{Abelian Cayley graphs for groups of the form $K=\Z_s \times \Z_t \times \Z_n$}

An alternative approach for the construction of an Abelian Cayley graph is to consider the direct product of three unrestricted cyclic groups $K=\Z_s \times \Z_t \times \Z_n$ with an appropriate connection set $X$. The approach of the previous section is adapted accordingly. First we need to adjust the definition of the component sets of the connection set.

\begin{definition}
For any $s, t, n \in \mathbb{N}, x\in \Z_s, y\in \Z_t, v, w\in \Z_n$, let $b_v(x)=(x,0,v)$ and $c_w(y)=(0,y,w)$, with $b^{-1}_v(x)=(-x,0,-v)$, and $c^{-1}_w(y)=(0,-y,-w)$.
For any $v, w\in \Z_n$ we also define $B_v=\{ b_v(x)$, $b^{-1}_v(x)$: $x\in \Z_s\}$ and $C_w=\{ c_w(y)$, $c^{-1}_w(y)$: $y\in \Z_t\}$.
\end{definition}

In this case, the sets $A_u$ are undefined as there is no fixed relation between $s$ and $t$. Hence only $B_v$ and $C_W$ may be considered for inclusion in the connection set $X$. For $v\ne 0$, $B_v$ has size $2s$ and for $w\ne 0$, $C_w$ has size $2t$. The size of $B_0$ is $s$, and of $C_0$ is $t$. Without loss of generality we consider the optional inclusion of $C_0$ and always omit $B_0$. As before, for any $l\ge 4$ we consider different values of $n$ and for each $n$, the corresponding family of Abelian groups $K_l=\Z_s \times \Z_t \times \Z_n$ for any values of $s, t$. For each $K_l$ we let $m=\lfloor l/2 \rfloor$ and consider generating sets $X$ comprised of all possible combinations of the sets $B_v, C_w$, with $v, w\ne 0$, along with a fixed number of other elements of $K_l$ such that the total number of sets $B_v, C_w$ is $m$, and including $C_0$ if $l$ is odd, with the condition that the Cayley graph $C(K_l,X)$ has diameter 2. For any $l$ we define $n_l$ to be the largest value of $n$ for which such a Cayley graph $C(K,X)$ exists, and define $K_l=\Z_s \times \Z_t \times \Z_{n_l}$. We denote the order of $C(K_l,X)$ by $UAC_l(d,2)$. Finally we define the class of groups ${\cal K}=\{K_l: l\ge 4\}$ and $UAC_{\cal K}(d,2)=\sup _{l\ge 4} {UAC_l(d,2)}$. Theorem \ref{theorem:k2c} establishes an improved upper bound for such graphs, with quadratic coefficient 1/4.

\begin{theorem}
With $UAC_{\cal K}(d,2)$ as defined above, $UAC_{\cal K}(d,2) \le (1/4) d^2 + O(d)$.

\label{theorem:k2c}
\end{theorem}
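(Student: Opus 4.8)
The plan is to adapt the argument of Theorems~\ref{theorem:k2a} and~\ref{theorem:k2b} to the group $K=\Z_s\times\Z_t\times\Z_n$, the essential change being that no sets $A_u$ are available, so the connection set $X$ is assembled only from $B_v$ ($v\in V$, $v\ne 0$, $g_b=|V|$), $C_w$ ($w\in W$, $w\ne 0$, $g_c=|W|$), optionally $C_0$, and a bounded number of further elements $E$ with $|E|=O(1)$; here $g_b+g_c=m=\lfloor l/2\rfloor$. First I would record the relevant sumset identities: for $v,w\ne 0$ the products $b_v^{\pm1}(x)\,c_w^{\pm1}(y)$ run over $(\pm x,\pm y,\pm v\pm w)$, so as $x,y$ range over $\Z_s,\Z_t$ the pair $B_v,C_w$ covers the four complete ``slices'' $\Z_s\times\Z_t\times\{c\}$ with $c\in\{v+w,v-w,-v+w,-v-w\}$, and, if $C_0\subseteq X$, the pair $C_0,B_v$ covers the two slices $c\in\{v,-v\}$. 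By contrast every element of $B_v+B_{v'}$ (including $v=v'$) has second coordinate $0$, every element of $C_w+C_{w'}$ and of $C_0+C_w$ has first coordinate $0$, and each sumset $e+X$ with $e\in E$ has size at most $|X|=d$; all of these are ``inefficient''.

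The decisive step is the bound on $n$. Since $l$, hence $m$ and $n$, are fixed while $d\to\infty$ through growing $s,t$, a complete slice $\Z_s\times\Z_t\times\{c\}$ contains $st=\Theta(d^2)$ elements, whereas $X$ together with the inefficient part of $X+X$ meets any fixed slice in only $O(s)+O(t)+O(d)=O(d)$ elements (the thin slices $\Z_s\times\{0\}\times\{c\}$ and $\{0\}\times\Z_t\times\{c\}$ have sizes $s$ and $t$, and $n=O(1)$). As $C(K,X)$ has diameter $2$, we have $K=\{0\}\cup X\cup(X+X)$, so every slice lies in this set; for large $d$ this forces each residue $c$ to be covered by an efficient pair, that is, $c\in T$ where $T=\{\,\pm v\pm w:v\in V,\ w\in W\,\}$, enlarged by $\{\pm v:v\in V\}$ when $C_0\subseteq X$. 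Hence $n\le|T|$, giving
\[
n\le 4g_bg_c \quad\text{(without $C_0$)},\qquad n\le 4g_bg_c+2g_b\quad\text{(with $C_0$)}.
\]

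To conclude, combine this with the degree count $d=|X|\ge 2g_bs+2g_ct+O(1)$, respectively $d\ge 2g_bs+(2g_c+1)t+O(1)$ when $C_0\subseteq X$. For fixed $g_b,g_c$ the product $st$ under the first constraint is largest when $2g_bs=2g_ct$, giving $st\le d^2/(16g_bg_c)+O(d)$; under the second, equalising the two terms gives $st\le d^2/\bigl(8g_b(2g_c+1)\bigr)+O(d)$. Multiplying by the bound on $n$, the factors $g_b$ and $g_c$ cancel exactly, so in both cases $|K|=stn\le (1/4)d^2+O(d)$. The degenerate possibilities $g_b=0$, or $g_c=0$ with $C_0\not\subseteq X$, leave one of $s,t$ bounded and hence $|K|=O(d)$, and are set aside; since the coefficient $1/4$ is obtained for every admissible $l$, the bound passes to $UAC_{\cal K}(d,2)=\sup_{l\ge 4}UAC_l(d,2)$.

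I expect the main obstacle to be making the ``only efficient pairs can cover a slice'' step watertight: one must verify that no combination of the thin slices contributed by the $B$--$B$, $C$--$C$ and $C_0$--$C_w$ pairs, together with the $O(d)$ elements reachable through the $O(1)$ exceptional connection-set elements, can cover even a single complete slice, which rests on the cardinality comparison $\Theta(d^2)$ versus $O(d)$ and on $n$ being a bounded constant. The remaining optimisation is a one-line AM--GM computation, and it is notably rigid: whatever the split $g_b+g_c=m$, the quadratic coefficient collapses to exactly $1/4$, matching the Dougherty--Faber lower bound $L_{AC}(d,2)=\tfrac14 d^2+O(d)$ and leaving no room for improvement via this construction.
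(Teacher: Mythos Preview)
Your argument is correct and follows essentially the same approach as the paper: bound $n$ by the set of residues $\pm v\pm w$ reachable from efficient $B_vC_w$ pairs, bound $d$ from below by the sizes of the $B_v$ and $C_w$ sets, and combine. The only notable difference is in the final step, where the paper optimises $n$ over $g_b$ by calculus (obtaining $n\le (2m+1)^2/4$) and then takes $s=t$, whereas you keep $g_b,g_c$ arbitrary, apply AM--GM to $st$, and observe that the factors cancel exactly for every split; your version also handles the even-$l$ case (no $C_0$) and makes the slice-counting justification more explicit.
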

\begin{proof}

Let $\Z_s$, $\Z_t$ and $\Z_n$ be the additive cyclic groups of order $s$, $t$ and $n$ respectively. Let $K=\Z_s \times \Z_t \times \Z_n$. Then $K$ is an Abelian group. Now consider the Cayley graph, $C(K,X)$, of the group $K$ with a connection set $X$ which includes the union of $B_v$ and $C_w$ for multiple non-zero values of $v$ and $w$, numbering $m$ in total. Consider $B_v$ for $v \in V$ and $C_w$ for $w \in W$ where $V$ and $W$ are index sets with $g_b=|V|$ and $g_c=|W|$, so that $g_b+g_c=m$. $X$ is constructed to be self-inverse so that $C(K,X)$ is an undirected circulant graph of degree $d=|X|$. The inverse-closed set $C_0$ is also included in $X$, and we define $l=2m+1$. We now investigate how elements of $K$ may be constructed from pairs of the sets $B_v$, $C_w$.

As for the previous case of Abelian Cayley graphs, consider $B_v$ and $C_w$ for $v \in V$ and $w \in W$. For any $x\in \Z_s$, $y\in \Z_t$ we have
\[
\begin{array} {l l l}
(x,y,v+w) & =(x,0,v)(0,y,w) & =b_v(x) c_w(y) \\
(x,y,v-w) & =(x,0,v)(0,y,-w) & =b_v(x) c^{-1}_w(-y) \\
(x,y,-v+w) & =(x,0,-v)(0,y,w) & =b^{-1}_v(-x) c_w(y) \\
(x,y,-v-w) & =(x,0,-v)(0,y,-w) & =b^{-1}_v(-x) c^{-1}_w(-y) 
\end{array}
\]

Hence each pair $B_v C_w$ can generate all elements of $K$ with up to four values of $\Z_n$ for $w\ne 0$ and up to two values of $\Z_n$ for $w=0$. On the other hand the first coordinate of the sum of an element of $C_w$ and of $C_{w'}$ is always 0, as is the second coordinate of the sum of an element of $B_v$ and of $B_{v'}$. So these combinations do not contribute to an efficient covering of $K$.  Summing the elements listed in the cases above gives the following lower bound for the degree $d$ of the corresponding Abelian Cayley graph:
\[
\begin{array} {l l l}
d & =|X| & \ge  2g_bs+(2g_c+1)t
\end{array}
\]
An upper bound for the value of $n$ is obtained by assuming there is no duplication between the values of $v+w$, $v-w$, $-v+w$ and $-v-w$ across all $v, w$ for the given combinations of $B_v$ and $C_w$. Thus the upper bound for $n$ is given by
\[
\begin{array} {l}
n \le 4g_bg_c+2g_b =4g_bm-4g^2_b+2g_b, \mbox{ as } m=g_b+g_c 
\end{array}
\]

This is a maximum when the derivative with respect to $g_b$ is zero. We have $dn/dg_a=4m-8g_b+2=0$.
Thus $g_b=(2m+1)/4$ and $g_c=(2m-1)/4$, and hence $n\le  m^2+m+1/4$.
Also we have $d=|X|= (2m+1)s/2+(2m+1)t/2=(s+t)(2m+1)/2$, so that $m=d/(s+t)-1/2$.

Then as an upper bound, we have $|K|\le st(m^2+m+1/4)=(st/(s+t)^2)d^2$, which is maximal when $s=t$. Thus $|K|\le(1/4)d^2+O(d)$.
\end{proof}


\section{Extending the validity to any degree above a threshold}

We note that the graphs from both constructions by Macbeth, \v{S}iagiov\'a and \v{S}ir\'a\v{n} and from Vetr\'ik's are only established for values of the degree which are a linear function of a sequence of primes belonging to a prescribed congruence class or set of classes. For example, for Macbeth, \v{S}iagiov\'a and \v{S}ir\'a\v{n}'s circulant graph construction the degree $d=5p-3$ where the prime $p\equiv 2\pmod3$. A recent paper by Cullinan and Hajir \cite{Cullinan} defines a method of identifying a bound on the size of interval which will always contain at least one prime of a prescribed congruence class. This builds on an earlier paper by Ramar\'e and Rumely \cite{Ramare} which includes a table, Table 1, that defines triples $(k, x_0, \epsilon)$ where $k$ is the modulo of the congruence class, $x_0$ is a threshold minimum and $\epsilon$ is a corresponding factor. For any such triple Cullinan and Hajir established that for any $x>x_0$ and $\delta>2\epsilon/(1-\epsilon)$ the interval $(x, x(1+\delta)]$ will contain at least one prime $p\equiv a\pmod k$ for any $a$ coprime with $k$. Therefore in each of the graph constructions discussed, for any sufficiently large degree $d$ it is possible to find a prime of the correct congruence class such that the corresponding degree $d'<d(1+\delta)$. Within the table, $x_0$ takes four values, from $10^{10}$ to $10^{100}$, with the lowest value of $\epsilon$ corresponding to $x_0=10^{100}$. In Theorem \ref{theorem:ex} below we take the largest value for $x_0$ in order to define the largest possible lower bound on the asymptotic value of the quadratic coefficient for each construction which is valid for any degree above the corresponding threshold.

\begin{theorem}
For the Macbeth, \v{S}iagiov\'a and \v{S}ir\'a\v{n} circulant graph construction with quadratic coefficient $9/25=0.3600$, the order $n$ of the graph of degree $d$ is bounded below by $n>0.3581d^2+O(d)$ for any $d>5\times 10^{100}$. For the Vetr\'ik circulant graph construction with quadratic coefficient $13/36\approx 0.3611$, the order $n$ of the graph of degree $d$ satisfies $n>0.3582d^2+O(d)$ for any $d>6\times 10^{100}$. For the Macbeth, \v{S}iagiov\'a and \v{S}ir\'a\v{n} Abelian Cayley graph construction with quadratic coefficient $3/8=0.3750$, the order $n$ of the graph of degree $d$ satisfies $n>0.3749d^2+O(d)$ for any $d>4\times 10^{100}$.

\label{theorem:ex}
\end{theorem}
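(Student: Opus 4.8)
The plan is to apply the Cullinan--Hajir interval result directly to each of the three constructions, treating them uniformly. For a construction with degree-to-prime relation $d = \ell p + \delta$ and order $n = (c/\ell^2)(d-\delta)(d-\delta-\ell) = c\,p(p-1)$ (where $c = 9$, $\ell = 5$ for the first; $c = 13$, $\ell = 6$ for the second; $c = 6$, $\ell = 4$ for the Abelian Cayley case), the idea is: given an arbitrary target degree $d$ (large), we cannot expect $d$ itself to be of the form $\ell p + \delta$ with $p$ prime in the required congruence class, but we \emph{can} find a prime $p$ in that class with $\ell p + \delta$ only slightly below $d$, construct the graph of that smaller degree, and note that a graph of degree $d' \le d$ is in particular a graph of degree $\le d$. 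The loss incurred is exactly the ratio $(d'/d)^2$, so the effective quadratic coefficient degrades from $c/\ell^2$ to roughly $(c/\ell^2)(d'/d)^2 \ge (c/\ell^2)(1+\delta_{\text{CH}})^{-2}$, where $\delta_{\text{CH}}$ is the interval width parameter from Cullinan--Hajir.

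First I would extract from the Ramar\'e--Rumely table (via Cullinan--Hajir) the relevant triple $(k, x_0, \epsilon)$ for each modulus: $k = 3$ for the first construction, $k = 13$ for the second, and note the Abelian Cayley construction needs $p$ a prime power with no congruence restriction but it suffices to use ordinary primes with $k = 1$ (so $a = 1$, any prime works). Taking $x_0 = 10^{100}$ as stated gives the smallest $\epsilon$ in the table, hence the best bound; I would then set $\delta_{\text{CH}} = 2\epsilon/(1-\epsilon)$ so that, by the quoted theorem, every interval $(x, x(1+\delta_{\text{CH}})]$ with $x > 10^{100}$ contains a prime in the desired class. Next, given degree $d > \ell \cdot 10^{100}$, I would set $x = (d-\delta)/\ell$ (so $x > 10^{100}$), pick a prime $p \in (x/(1+\delta_{\text{CH}}), x]$ in the right class — equivalently $p \in (x', x'(1+\delta_{\text{CH}})]$ with $x' = x/(1+\delta_{\text{CH}})$ — so that $p \le x$, i.e. $d' := \ell p + \delta \le d$, while $p > x/(1+\delta_{\text{CH}})$ gives $d' \ge (d-\delta)/(1+\delta_{\text{CH}}) + \delta$. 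The resulting graph has $n = c\,p(p-1) = (c/\ell^2)(d')^2 + O(d') \ge (c/\ell^2)(1+\delta_{\text{CH}})^{-2} d^2 + O(d)$.

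The remaining work is purely arithmetic: plug in the numerical values of $\epsilon$ for $k = 1, 3, 13$ at $x_0 = 10^{100}$, compute $(1+\delta_{\text{CH}})^{-2}$, multiply by $9/25$, $13/36$, $3/8$ respectively, and verify the products round to $0.3581$, $0.3582$, $0.3749$ — and that the thresholds come out as $5\times10^{100}$, $6\times10^{100}$, $4\times10^{100}$, which is immediate since $d > \ell x_0$. I expect no genuine obstacle here; the only mild subtlety is bookkeeping the $O(d)$ term (the interval shift contributes only to lower-order terms since $d' = d(1 + o(1))$ would be false — rather $d'/d$ is bounded below by a constant $(1+\delta_{\text{CH}})^{-1}$ close to $1$, and it is this constant, not a vanishing quantity, that produces the slight drop from $0.3600$ to $0.3581$). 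One should also confirm that the admissibility conditions of the original constructions (e.g. $p \equiv 2 \pmod 3$ and $(p, n) = (p-1, n) = 1$, or $p$ a prime power for the Abelian case) are preserved by the chosen $p$; for $k = 3$ this is exactly the congruence class selected, and coprimality with the fixed small $n$ holds automatically for all large primes, so this is routine.
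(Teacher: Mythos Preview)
Your proposal is correct and follows essentially the same route as the paper: apply the Cullinan--Hajir interval result with the Ramar\'e--Rumely triples $(k,x_0,\epsilon)$ for $k=3,13,1$ at $x_0=10^{100}$, compute $\delta_{\text{CH}}=2\epsilon/(1-\epsilon)$, and obtain the degraded coefficient $(c/\ell^2)(1+\delta_{\text{CH}})^{-2}$, yielding the stated numerical values and thresholds $d>\ell\cdot 10^{100}$. Your bookkeeping on the interval direction (arranging $d'\le d$ via $x'=x/(1+\delta_{\text{CH}})$ and then padding the degree) is in fact more explicit than the paper's rather terse proof, which simply records the factor $1/(1+\delta)^2$ without spelling out the padding step.
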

\begin{proof}
For the Macbeth, \v{S}iagiov\'a and \v{S}ir\'a\v{n} circulant graph construction with quadratic coefficient $9/25$ we take the triple $k=3, x_0=10^{100}, \epsilon=0.001310$ from Table 1 of \cite{Ramare}, giving a value of $\delta=0.002623$. Then $9/25 \times 1/(1+\delta)^2\approx 0.35811$. This is valid for $p>10^{100}$, and hence for $d>5\times 10^{100}$ as $d<d'=5p-3$.
For the Vetr\'ik circulant graph construction with quadratic coefficient $13/36$ we take the triple $k=13, x_0=10^{100}, \epsilon=0.002020$ from the table, giving a value of $\delta=0.004048$. Then $13/36 \times 1/(1+\delta)^2\approx 0.35821$, and we note that $d'=6p-2$.
For the Macbeth, \v{S}iagiov\'a and \v{S}ir\'a\v{n} Abelian Cayley graph construction with quadratic coefficient $3/8$ we take the triple $k=1, x_0=10^{100}, \epsilon=0.000001$ from the table, giving a value of $\delta=0.000002$. Then $3/8 \times 1/(1+\delta)^2\approx 0.37499$, and we note that $d'=4p-2$.
\end{proof}


\section{Conclusion}

For circulant graphs of diameter 2, employing the method of construction with the direct product of the additive and multiplicative groups of a Galois field and cyclic group of any order, it proves to be impossible to achieve a quadratic coefficient better than $3/8$, thus improving the upper bound for this construction method from $1/2$ to $3/8$. For Abelian Cayley graphs constructed in the same way the upper bound on the quadratic coefficient is also improved to a value of 3/8. Applying Cullinan and Hajir's property of intervals containing a prime of a prescribed congruence class, the asymptotic value of the quadratic coefficient valid for every degree above a threshold exceeds 0.358 for the two circulant graph construction and remains at 0.375 to three significant figures for the Abelian Cayley graph construction. Also the quadratic coefficient upper bound for the direct product of three unconstrained cyclic groups with the given construction method is improved to equal the general lower bound of $1/4$.

\begin{table} [!hb]
\small
\caption{\small Order and generator set for extremal circulant graphs of diameter 2 up to degree 23} 
\centering 
\setlength {\tabcolsep} {5pt}
\begin{tabular}{ @ { } c l l } 
\noalign {\vskip 1mm}  
\hline\hline 
\noalign {\vskip 1mm}  
Degree $d$ & Order $n$ & Generator set* \\
\hline
\noalign {\vskip 1mm}
2 & 5 &1  \\
3 & 8 &1  \\
4 & 13 &1, 5 \\
5 & 16 &1, 3  \\
6 & 21 &1, 2, 8 \\
7 & 26 &1, 2, 8  \\
8 & 35 &1, 6, 7, 10  \\
9 & 42 &1, 5, 14, 17 \\
10 & 51 &1, 2, 10, 16, 23  \\
11 & 56 &1, 2, 10, 15, 22  \\
12 & 67 &1, 2, 3, 13, 21, 30  \\
13 & 80 &1, 3, 9, 20, 25, 33  \\
14 & 90 &1, 4, 10, 17, 26, 29, 41  \\
15 & 96 &1, 2, 3, 14, 21, 31, 39  \\
16 & 112 &1, 4, 10, 17, 29, 36, 45, 52   \\
17 & 130 &1, 7, 26, 37, 47, 49, 52, 61   \\
18 & 138 &1, 9, 12, 15, 22, 42, 27, 51, 68  \\
19 & 156 &1, 15, 21, 23, 26, 33, 52, 61, 65   \\
20 & 171 &1, 11, 31, 36, 37, 50, 54, 47, 65, 81   \\
21 & 192 &1, 3, 15, 23, 32, 51, 57, 64, 85, 91 \\
22 & 210 &2, 7, 12, 18, 32, 35, 63, 70, 78, 91, 92  \\
23 & 216 & 1, 3, 5, 17, 27, 36, 43, 57, 72, 83, 95  \\
\hline
\noalign {\vskip 1mm}
 \multicolumn {3} {l} {* In many cases multiple isomorphism classes exist but only one for each order is shown here.} \\
 \multicolumn {3} {l} {For even degree the connection set is derived from the generator set by including the} \\
 \multicolumn {3} {l} {additive inverses.  For odd degree the connection set also includes the self-inverse element $n/2$.} \\
\end{tabular}
\label{table:k2} 
\end{table}

An interesting alternative perspective on the problem is provided by taking the known extremal solutions and determining the quadratic polynomial in the degree that gives a minimum least squares fit to their orders. For circulant graphs of diameter 2, extremal solutions up to degree $d=23$ have been confirmed by exhaustive computer search up to their respective upper bounds $M_{AC}(d,2)$ by Grahame Erskine and the author, see Table \ref{table:k2}. The resulting best fit equation was found to be $n=0.375d^2+0.961d+2.07$, which is a remarkably focused, albeit tentative, indication of support for a conjecture that the asymptotic value of the quadratic coefficient for the order of extremal solutions for large $d$ is in fact $3/8$. This leads to the challenge for further research: to develop an improved upper bound with quadratic coefficient less than $1/2$ and as close as possible to $3/8$ which is valid for general circulant graphs or Abelian Cayley graphs.


\addcontentsline{toc}{chapter}{Bibliography}

\end{document}